\theoremstyle{plain} 
\newtheorem{thm}{Theorem}
\newtheorem{lem}{Lemma}
\theoremstyle{definition}
\theoremstyle{remark}
\newtheorem{remark}{Remark}
\newcommand{\RR}{\mathbb{R}}
\newcommand{\E}{\mathsf{E}}
\newcommand{\prob}{\mathsf{P}}
\newcommand{\eps}{\varepsilon}
\renewcommand{\phi}{\varphi}
\newcommand{\ind}{\overset{\text{\tiny ind}}{\sim}}
\newcommand{\nm}{\mathsf{N}}
\newcommand{\be}{\mathsf{Beta}}
\newcommand{\ber}{\mathsf{Ber}}
\newcommand{\bin}{\mathsf{Bin}}
\renewcommand{\S}{\mathcal{S}}
\title{Asymptotically minimax empirical Bayes estimation of a sparse normal mean vector}
\author{
Ryan Martin \\
Department of Mathematics, Statistics, and Computer Science \\
University of Illinois at Chicago \\
\url{rgmartin@uic.edu} \\
\mbox{} \\
Stephen G. Walker \\
Department of Mathematics \\
University of Texas at Austin \\
\url{s.g.walker@math.utexas.edu}
}
\date{\today}
\begin{document}

\maketitle 


\begin{abstract}
For the important classical problem of inference on a sparse high-dimensional normal mean vector, we propose a novel empirical Bayes model that admits a posterior distribution with desirable properties under mild conditions.  In particular, our empirical Bayes posterior distribution concentrates on balls, centered at the true mean vector, with squared radius proportional to the minimax rate, and its posterior mean is an asymptotically minimax estimator.  We also show that, asymptotically, the support of our empirical Bayes posterior has roughly the same effective dimension as the true sparse mean vector.  Simulation from our empirical Bayes posterior is straightforward, and our numerical results demonstrate the quality of our method compared to others having similar large-sample properties.   

\medskip

\emph{Keywords and phrases:} Data-dependent prior; high-dimensional; fractional likelihood; posterior concentration; shrinkage; two-groups model.
\end{abstract}

\section{Introduction}
\label{S:intro}

High-dimensional problems, where the parameter is effectively lower-dimensional, are now commonplace in statistical applications.  Examples include variable selection in regression \citep{fan.lv.2010}, covariance matrix estimation \citep{lam.fan.2009, cai.zhang.zhou.2010, cai.zhou.2012}, large-scale multiple testing \citep{bcfg2010, cai.jin.2010}, and function estimation \citep{cai2012, johnstonesilverman2005}.  The canonical example, which we shall consider here, is that of estimating a sparse high-dimensional normal mean vector.  Let $X_1,\ldots,X_n$ be independent observations, with $X_i \sim \nm(\theta_i,1)$, $i=1,\ldots,n$, and the goal is to estimate the mean vector $\theta=(\theta_1,\ldots,\theta_n)$ under squared-error loss $\|\hat\theta - \theta\|^2$, where $\|\cdot\|$ is the usual $\ell_2$-norm on $\RR^n$ \citep[e.g.,][]{donohojohnstone1994b, abramovich2006, brown.greenshtein.2009, jiang.zhang.2009, castillo.vaart.2012, donoho1992}.  With only a single observation $X_i$ for each $\theta_i$, accurate estimation is not possible without some structure.  Assuming $\theta$ is sparse, in the sense that most of the $\theta_i$'s are zero, makes the effective dimension relatively small so that reasonably accurate estimation becomes possible. 

This normal means model is by now a classic one which has been widely studied from both a mathematical and applied point of view.  Despite the extent to which the many-normal-means model has been studied, it is still a practically important model in a variety of problems.  For example, the sparse normal mean model is the cornerstone for many modern Bayes and empirical Bayes multiple testing procedures, e.g., \citet{scottberger}, \citet{jincai2007}, \citet{bogdan.ghosh.tokdar.2008}, \citet{efron2008}, and \citet{mt-test}.  More recently, \citet{scott.FDRreg} have presented a novel use of the same classical model considered here but in the regression setting.  Clearly, research on this classical model is alive and well, and the results provided by our unique approach, namely, asymptotically minimax concentration rates and superior finite-sample performance compared to many existing methods, are useful contributions.

Recently, \citet{castillo.vaart.2012} have considered the performance of several Bayesian methods for this problem.  They focus on frequentist properties of a Bayesian posterior distribution, and the corresponding Bayes estimators, for priors with a two-groups structure.  In sparse estimation problems, a two-groups prior puts positive probability on $\theta$ vectors with some exact zero entries, so the marginal prior for each component is a mixture of a continuous distribution and a point-mass at zero.  \citet{castillo.vaart.2012} show that, for a suitably chosen two-groups prior, the posterior concentrates around the true signal at the asymptotically optimal minimax rate.  From this, concentration properties of posterior quantities, such as the posterior mean, can be derived.  An important message in their paper is that care is needed in choosing the prior for the non-zero $\theta$ entries.  In particular, they show that priors with too light tails, e.g., Gaussian, give sub-optimal concentration properties.  The results presented herein provide similar guidance, though our perspective is quite different.    

Here we take a novel empirical Bayes approach.  In particular, we present a hierarchical two-groups prior where, given a weight $\omega$, the $\theta_i$'s are modeled as independent, with $\theta_i = 0$ with probability $g_i(\omega)$, and $\theta_i \sim h_i(\theta \mid \omega)$ with probability $1-g_i(\omega)$, where the functions $g_i$ and $h_i$ depend on data $X_i$.  These functions are defined explicitly in Section~\ref{S:pseudo}.  To complete the hierarchy, $\omega$ is assigned a prior concentrated near 1.  We argue that the effect of the data-dependent prior is mitigated by preventing the posterior from tracking the data too closely.  This approach provides some new insights, which we compare with those coming from the fully Bayesian framework of \citet{castillo.vaart.2012}.  

In Section~\ref{S:theory} we present our theoretical framework.  First, we show that our empirical Bayes posterior concentrates, with probability~1, around the true mean vector at the optimal minimax rate (with respect to square error loss) for the assumed sparsity class.  Concentration rate theorems for empirical Bayes posteriors are relatively scarce in the literature, and our technique for handling the challenges that arise from data appearing in both the likelihood and prior might be useful in other problems; one possible extension is discussed briefly in Section~\ref{S:discuss}.  We then show that our empirical Bayes posterior mean is an asymptotically minimax estimator of $\theta$.  Finally, we show that, asymptotically, the support of our empirical Bayes posterior has, up to a logarithmic factor, the same effective dimension as the true sparse $\theta$.  An interesting observation is that the particular form of the prior on $\omega$ is the main catalyst for concentration of our empirical Bayes posterior.  

Section~\ref{S:numerical} describes computation of our empirical Bayes posterior mean via a straightforward Markov chain Monte Carlo.  Simulation results are presented to show that our empirical Bayes posterior mean generally outperforms those Bayesian and non-Bayesian competitors with comparable large-sample properties.  In particular, we compare our method with 
a two hard thresholding estimators \citep{donohojohnstone1994b}, Bayes and empirical Bayes estimators based on priors with a two-groups structure \citep{castillo.vaart.2012, johnstonesilverman2004}, and a new estimator based on the one-group Dirichlet--Laplace prior \citep{dunson.shrinkage}.  Our proposed empirical Bayes estimator is competitive in all cases considered here, and, in many cases, is strikingly better than the others.  Some concluding remarks are given in Section~\ref{S:discuss}.

\section{An empirical Bayes model}
\label{S:pseudo}

For the independent normal mean model, $X_i \sim \nm(\theta_i,1)$, $i=1,\ldots,n$, let $p_{\theta_i}(x_i)$ denote the density of $X_i$, and, for $x=(x_1,\ldots,x_n)$, let $p_\theta^n(x) = \prod_{i=1}^n p_{\theta_i}(x_i)$ denote the corresponding joint density of $X=(X_1,\ldots,X_n)$.  Define a data-dependent hierarchical prior $\Pi_X$ for $\theta=(\theta_1,\ldots,\theta_n)$ as follows.  Introduce a weight parameter $\omega \in (0,1)$, and take the joint prior distribution for $(\theta_1,\ldots,\theta_n,\omega)$, under $\Pi_X$, to have density proportional to 
\begin{equation}
\label{eq:ebprior}
\omega^{\alpha n - 1} \prod_{i=1}^n \Bigl\{ \omega e^{\frac12(1-\kappa)X_i^2} \delta_0(d\theta_i) + (1-\omega) \tfrac{1}{\sqrt{2\pi\sigma^2}} e^{-\frac12 \bigl[\frac{1-(1-\kappa)\sigma^2}{\sigma^2}\bigr](\theta_i-X_i)^2} \,d\theta_i \Bigr\}, 
\end{equation}
where $\alpha > 0$, $\kappa \in (0,1)$, and $\sigma^2 > 0$ are parameters to be discussed further in Sections~\ref{S:theory}--\ref{S:numerical}.  A representation of this as a genuine empirical Bayes plug-in prior is given in Section~\ref{SS:fraction}.  The dependence of the prior on $(\alpha,\kappa,\sigma^2)$ will not be reflected in our notation.  

Observe that if $\sigma^2 < (1-\kappa)^{-1}$, then the prior for $\theta_i$ is proper, a mixture of a point mass and a Gaussian centered at $X_i$.  When $\sigma^2 > (1-\kappa)^{-1}$, the prior is improper.  In any case, the posterior is proper, so this possible impropriety of the prior is not a concern.  In fact, $\sigma^2=(1-\kappa)^{-1}$ is a critical boundary, corresponding to an improper uniform prior for the non-zero $\theta_i$'s; see Section~\ref{SS:concentration}.  The term $\omega^{\alpha n - 1}$ in the joint density, which resembles a beta density, turns out to be critical to the success of our proposed method, both in theory and in implementation.  

Given data $X=(X_1,\ldots,X_n)$ from the normal mean model and the empirical Bayes prior distribution $\Pi_X$ for $\theta$, we could combine these to form an empirical Bayes posterior distribution via Bayes theorem.  That is, for a suitable set $A$ in the $\theta$-space, define the probability measure 
\[ Q_n(A) \equiv Q_{n,X}(A) \propto \int_A p_\theta^n(X) \, \Pi_X(d\theta). \]
We will investigate concentration properties of the empirical Bayes posterior in Section~\ref{S:theory}.  In particular, we show that the empirical Bayes posterior mean derived from $Q_n$ is an asymptotically minimax estimator of $\theta$.  

It might seem that our apparent double-use of the data---in the prior and in the likelihood---could lead to a posterior $Q_n$ that tracks the data too closely.  To see that this is not the case, note that if $|X_i|$ is large, then the prior probability for $\theta_i=0$, under $\Pi_X$, would be rather large.  Thus, the prior has an unexpected shrinkage effect, pushing $\theta_i$ corresponding to $X_i$ with large magnitude towards zero.  On the other hand, an $X_i$ with large magnitude shifts the prior on the non-zero part further from zero, effectively making the tails heavier, to accommodate large signals.  These two phenomena suggest that using data in both the prior and the likelihood will not result in a posterior that tracks data too closely.  In fact, our theoretical and numerical results demonstrate that the posterior is doing the right thing, namely, concentrating on the true $\theta$.

\section{Empirical Bayes posterior asymptotics}
\label{S:theory}

\subsection{A fractional likelihood perspective}
\label{SS:fraction}

To start, it will help to look at the proposed model from a different perspective.  For mathematical convenience, we shift our focus and rewrite the empirical Bayes posterior $Q_n$ using a fractional likelihood.  That is, we write $p_\theta^n(X) = p_\theta^n(X)^\kappa p_\theta^n(X)^{1-\kappa}$ and move the $1-\kappa$ fraction into the prior $\Pi_X$ defined above.  The effect of this is an alternative prior for $(\theta,\omega)$ of a very simple form: 
\begin{equation}
\label{eq:ebprior2}
\begin{split}
\theta_i \mid \omega & \ind \omega \delta_0 + (1-\omega) \nm(X_i, \sigma^2), \quad i=1,\ldots,n, \\
\omega & \sim \be(\alpha n, 1).
\end{split}
\end{equation}
To provide some further intuition for the prior \eqref{eq:ebprior} presented in Section~\ref{S:pseudo}, we may consider a data-free version of the prior in \eqref{eq:ebprior2}, where the $X_i$'s are replaced by hyperparameters $\mu_i$.  The marginal likelihood for $\mu=(\mu_1,\ldots,\mu_n)$, given $\omega$, is
\[ \prod_{i=1}^n \bigl\{ \omega \nm(X_i \mid 0,1) + (1-\omega) \nm(X_i \mid \mu_i, 1) \bigr\}. \]
and $X_i$ is clearly the maximum marginal likelihood estimate of $\mu_i$.  The use of plug-in estimates for mean hyperparameters was considered in \citet{babenko.belitser.2010} though in a slightly different context.  We get the empirical Bayes prior \eqref{eq:ebprior} by plugging in $X_i$ for $\mu_i$ and undoing the fractional likelihood.  

Within this alternative setup, we introduce independent binary latent variables $I_1,\ldots,I_n$, where $I_i=1$ if and only if $\theta_i=0$.  Then, given $\omega$, the indicators $I_1,\ldots,I_n$ are independent $\ber(\omega)$ variables.  These indicators  characterize the support of the vector $\theta$; in particular, $\sum_{i=1}^n (1-I_i)$ is the number of non-zero $\theta_i$ and is distributed as $\bin(n,1-\omega)$.  The beta prior for $\omega$ is concentrated near 1 for $n$ large, so the support size will tend to be small, consistent with the assumption of sparsity.  \citet{castillo.vaart.2012}, on the other hand, focus primarily on priors directly on the support size, though this kind of beta--binomial prior is considered in their Example~2.2.  We find that direct use of the weight $\omega$ is both theoretically and computationally convenient; see Remark~\ref{re:beta}.

Write this new version of the prior as $\tilde\Pi_X$, and express the posterior as 
\[ Q_n(A) \propto \int_A p_\theta^n(X)^\kappa \, \tilde\Pi_X(d\theta). \]
This version of the empirical Bayes posterior is particular amenable for our asymptotic analysis; see, also \citet{walker.hjort.2001}.  The use of pseudo-posteriors, where an inverse temperature parameter plays the role of $\kappa$, has been considered in the statistics and machine learning literature \citep[e.g.,][]{zhang2006, jiang.tanner.2008, dalalyan.tsybakov.2008}, but our context is different.

\subsection{Lower bound on the denominator}
\label{SS:denominator}

In the normal mean model, let $\theta^\star$ denote the true mean vector.  Assume that $\theta^\star$ is sparse in the sense that most of its entries are zero.  To make this more precise, let $\S^\star \subset \{1,2,\ldots,n\}$ denote the support of $\theta^\star$, i.e., $\theta_i^\star \neq 0$ if and only if $i \in \S^\star$.  Let $s_n = \#\S^\star$ be the cardinality of $\S^\star$, and say that $\theta^\star$ is $s_n$-sparse.  Then by sparse we mean that $s_n \to \infty$ but $s_n = o(n)$ as $n \to \infty$.  That is, although $\theta^\star$ is $n$-dimensional, its effective dimension is actually much smaller. 

Start by rewriting the empirical Bayes posterior $Q_n$ once more as 
\begin{equation}
\label{eq:pseudo}
Q_n(A) = \frac{\int_A \{ p_\theta^n(X) / p_{\theta^\star}^n(X) \}^\kappa \,\tilde\Pi_X(d\theta)}{\int_{\RR^n} \{ p_\theta^n(X) / p_{\theta^\star}^n(X) \}^\kappa \,\tilde\Pi_X(d\theta)}. 
\end{equation}
Our overall goal is to show that $Q_n$ concentrates its mass near $\theta^\star$ with $\prob_{\theta^\star}$-probability~1.  The strategy is to show that the denominator of $Q_n$ is not too small, and the numerator, for sets $A_n$ away from $\theta^\star$, is not too large.     

Our first result gives a bound on the denominator of $Q_n$, like that which obtains from the familiar Kullback--Leibler property \citep[e.g.,][]{schwartz1965, ggr1999, bsw1999, ggv2000, shen.wasserman.2001}.  This lower bound will be used in Section~\ref{SS:concentration} to derive vanishing upper bounds on the $Q_n$-probability assigned to complements of balls around $\theta^\star$.  But besides as a tool for proving other things, the following lemma suggests that our empirical Bayes-style prior is sufficiently concentrated around $\theta^\star$.  As \citet{castillo.vaart.2012} show, without suitable prior concentration, the desired posterior concentration is not possible.  Therefore, if we associate lower bounds on the denominator of $Q_n$ in \eqref{eq:pseudo} with adequate prior concentration, then Lemma~\ref{lem:denominator} says that our prior is sufficiently concentrated around $\theta^\star$.    

\begin{lem}
\label{lem:denominator}
Let $D_n$ be the denominator in \eqref{eq:pseudo}.  If $\theta^\star$ is $s_n$-sparse, then there exists $\eta \in \RR$, depending on $(\kappa,\alpha,\sigma^2)$, such that $D_n > \frac{\alpha}{1+\alpha} \exp\{\eta s_n - 2 s_n\log(n/s_n) + o(s_n)\}$ with $\prob_{\theta^\star}$-probability~1.  
\end{lem}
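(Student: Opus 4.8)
The plan is to lower-bound $D_n = \int_{\RR^n} \{p_\theta^n(X)/p_{\theta^\star}^n(X)\}^\kappa \, \tilde\Pi_X(d\theta)$ by restricting the integral to a carefully chosen event and then computing a closed-form integral, since the prior $\tilde\Pi_X$ from~\eqref{eq:ebprior2} is a tractable product of point-mass-plus-Gaussian mixtures. First I would condition on the weight $\omega$ and integrate it out against the $\be(\alpha n, 1)$ prior only at the end; the bulk of the work is to lower-bound the $\theta$-integral for fixed $\omega$. The natural restriction is to the ``correct model'' $\{\theta : \theta_i = 0 \text{ for } i \notin \S^\star\}$, i.e.\ use the indicators $I_i = 1$ for $i \notin \S^\star$ and $I_i = 0$ for $i \in \S^\star$. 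On this event the prior contributes a factor $\omega^{n - s_n}$ from the point masses and $(1-\omega)^{s_n}$ times a product of Gaussian densities $\nm(X_i, \sigma^2)$ over $i \in \S^\star$.

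Next I would decompose the fractional log-likelihood ratio coordinatewise. For the null coordinates $i \notin \S^\star$ we have $\theta_i = \theta_i^\star = 0$, so $\{p_{\theta_i}(X_i)/p_{\theta_i^\star}(X_i)\}^\kappa = 1$ and those coordinates contribute nothing beyond the $\omega^{n-s_n}$ factor. For the signal coordinates $i \in \S^\star$, I would integrate $\{p_{\theta_i}(X_i)/p_{0^\star... }(X_i)\}^\kappa$ — more precisely $\{p_{\theta_i}(X_i)/p_{\theta_i^\star}(X_i)\}^\kappa$ — against $(1-\omega)\,\nm(X_i,\sigma^2)(d\theta_i)$. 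This is a Gaussian integral: $\exp\{-\tfrac{\kappa}{2}(X_i - \theta_i)^2\}$ against a $\nm(X_i, \sigma^2)$ kernel produces an explicit Gaussian normalizing constant depending on $\kappa$ and $\sigma^2$, and the ratio with $p_{\theta_i^\star}(X_i)$ contributes $\exp\{\tfrac{\kappa}{2}(X_i - \theta_i^\star)^2\}$. Collecting the constant from each such coordinate gives a factor of the form $c(\kappa,\sigma^2)^{s_n}$, i.e.\ $e^{\eta s_n}$ for a suitable $\eta = \eta(\kappa,\alpha,\sigma^2) \in \RR$, together with a stochastic term $\exp\{\tfrac{\kappa}{2}\sum_{i\in\S^\star}(X_i - \theta_i^\star)^2\}$ that I would simply bound below by $1$ (or control via the law of large numbers, but the trivial bound suffices here since $s_n \to \infty$).

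Finally, I would handle the $\omega$-integral: after the above steps $D_n \geq e^{\eta s_n} \int_0^1 \alpha n \,\omega^{\alpha n - 1} \omega^{n - s_n}(1-\omega)^{s_n}\, d\omega = e^{\eta s_n}\,\alpha n\, B(\alpha n + n - s_n + 1,\, s_n + 1)$ up to absorbing the stochastic factor. Using Stirling's approximation on this Beta function and $s_n = o(n)$, the dominant term is $(1-\omega^\star)^{s_n} \asymp (s_n/n)^{s_n} = \exp\{-s_n \log(n/s_n)\}$ from the most likely $\omega^\star \approx s_n/n$ region, but the beta prior concentrated near $1$ shifts this; a direct Stirling computation yields a leading exponential of order $\exp\{-2 s_n \log(n/s_n) + o(s_n)\}$ and a polynomial-in-$n$ factor that folds into the $o(s_n)$ (with the $\frac{\alpha}{1+\alpha}$ arising as the clean leading constant from the Beta integral when $\alpha n$ dominates $n - s_n$). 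The main obstacle I anticipate is getting the exponent on $\log(n/s_n)$ exactly right: one must carefully track how the $\omega^{\alpha n - 1}$ prior weight interacts with the combinatorial $\omega^{n - s_n}(1-\omega)^{s_n}$ term in the Beta integral — a naive ``plug in the mode'' heuristic gives the wrong constant — so the Stirling asymptotics of $B(\alpha n + n - s_n + 1, s_n + 1)$ must be done honestly, keeping all terms that are not genuinely $o(s_n)$.
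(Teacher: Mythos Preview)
Your plan is essentially the paper's: restrict to the ``correct'' indicator configuration matching $\S^\star$, factor the fractional likelihood ratio coordinatewise, evaluate the Gaussian integrals over $i\in\S^\star$ exactly, and then handle the $\omega$-integral last. The Gaussian computation you sketch gives precisely the paper's factor $(1+\kappa\sigma^2)^{-s_n/2}$, and either your trivial bound $\exp\{\tfrac{\kappa}{2}\sum_{i\in\S^\star}(X_i-\theta_i^\star)^2\}\ge 1$ or the paper's law-of-large-numbers argument $= e^{\kappa s_n/2 + o(s_n)}$ works; both are absorbed into $\eta s_n$.

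The one substantive difference is the $\omega$-integral. You propose Stirling on the exact Beta function $\alpha n\,B(\alpha n + n - s_n,\, s_n+1)$; done honestly this yields $\tfrac{\alpha}{1+\alpha}\exp\{-s_n\log(n/s_n) + O(s_n)\}$, i.e.\ coefficient $1$, not $2$, in front of $s_n\log(n/s_n)$. That is a \emph{stronger} bound than the lemma claims, so the lemma follows a fortiori. The paper instead uses an elementary chain of inequalities (restrict the range to $[0,1-s_n/n]$, bound $(1-\omega)^{s_n}$ below, then use $(1-b)^{1-b}>b^b$) which wastes a factor $(s_n/n)^{s_n}$ and lands at $-2s_n\log(n/s_n)$. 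So your anticipated ``main obstacle''---getting exactly the coefficient $2$---is not real: Stirling gives $1$, the paper's cruder route gives $2$, and either suffices since the lemma only asserts a lower bound. The constant $\tfrac{\alpha}{1+\alpha}$ falls out cleanly from both approaches as the leading ratio $\alpha n / ((1+\alpha)n)$.
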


\begin{proof}
Write $D_n$ in terms of the conditional prior $(\theta_1,\ldots,\theta_n) \mid \omega \sim \tilde\Pi_{X,\omega}$ and the marginal prior $\omega \sim \tilde\pi$ for $\omega$ under $\tilde\Pi_X$.  That is, 
\[ D_n = \int_0^1 \int_{\RR^n} \Bigl\{ \frac{p_\theta^n(X)}{p_{\theta^\star}^n(X)} \Bigr\}^\kappa \, \tilde\Pi_{X,\omega}(d\theta) \,\tilde\pi(d\omega) = \int_0^1 \prod_{i=1}^n \int_{\RR} \Bigl\{ \frac{p_{\theta_i}(X_i)}{p_{\theta_i^\star}(X_i)}  \Bigr\}^\kappa \,\tilde\Pi_{X,\omega}(d\theta_i) \, \tilde\pi(d\omega). \]
For given $\omega$, the inner expectation involves an average over all configurations of the indicators $(I_1,\ldots,I_n)$ defined in Section~\ref{SS:fraction}.  This average is clearly larger than just the case where the indicators exactly match up with the support $\S^\star$ of $\theta^\star$, times the probability of that configuration.  That is, 
\begin{align*}
D_n > \int_0^1 \omega^{n-s_n}&(1-\omega)^{s_n} \, \tilde\pi(d\omega) \prod_{i \in \S^\star} \int_{\RR} e^{\frac{\kappa}{2}\{(X_i-\theta_i^\star)^2 - (X_i - \theta_i)^2\}} \frac{1}{\sqrt{2\pi\sigma^2}} e^{-\frac{1}{2\sigma^2}(X_i-\theta_i)^2} \,d\theta_i,  
\end{align*}
The term $\omega^{s_n-n}(1-\omega)^{s_n}$ corresponds to the probability for the configuration of $(I_1,\ldots,I_n)$ matching the support $\S^\star$.  The integral for $i \in \S^\star$ is the expectation of the normal density ratio for non-zero $\theta_i$ with respect to the $\nm(X_i,\sigma^2)$ prior.  Finally, the product over $i \not\in \S^\star$ disappears because $p_0(X_i) = p_{\theta_i^\star}(X_i)$ for $i \not\in \S^\star$.  To further bound this quantity, first pull out the terms $\exp\{\frac{\kappa}{2}(X_i-\theta_i^\star)^2\}$ in the latter integrand that do not depend on $\theta_i$.  Since, by the law of large numbers, $s_n^{-1} \sum_{i \in \S^\star} (X_i-\theta_i^\star)^2 \to 1$, as $n \to \infty$, with $\prob_{\theta^\star}$-probability~1, this part contributes a factor $\exp\{\frac{\kappa}{2} s_n + o(s_n)\}$ to the lower bound for $D_n$.  Next, 
\[ \int_{\RR} e^{-\frac{\kappa}{2}(X_i - \theta_i)^2} \frac{1}{\sqrt{2\pi\sigma^2}} e^{-\frac{1}{2\sigma^2}(\theta_i-X_i)^2} \,d\theta_i = \frac{1}{(1+\kappa\sigma^2)^{1/2}}. \]
So, the remaining product over $i \in \S^\star$ equals $(1 + \kappa\sigma^2)^{-s_n/2}$, and we can conclude that the entire product over $i \in \S^\star$ in the lower bound for $D_n$ is itself lower bounded by  
\[ \exp\Bigl[ \frac{s_n}{2} \{ \kappa - \log(1+\kappa\sigma^2) \} + o(s_n) \Bigr].  \]
It remains to bound the first integral over $\omega$.  Since $\pi(d\omega) = \alpha n \omega^{\alpha n-1}\,d\omega$, we have 
\begin{align}
\int_0^1 \omega^{n-s_n}(1-\omega)^{s_n} \,\tilde\pi(d\omega) & > \alpha n \int_0^{1-s_n/n} \omega^{n-s_n+\alpha n-1}(1-\omega)^{s_n} \,d\omega \notag \\
& > \Bigl( \frac{s_n}{n} \Bigr)^{s_n} \frac{\alpha n}{n-s_n+\alpha n} \Bigl(1 - \frac{s_n}{n} \Bigr)^{n-s_n + \alpha n} \notag \\
& > \frac{\alpha}{1+\alpha} \Bigl( \frac{s_n}{n} \Bigr)^{2s_n} \Bigl( 1 - \frac{s_n}{n} \Bigr)^{\alpha n}. \label{eq:rate}
\end{align}
The last inequality follows since $(1-b)^{1-b} > b^b$ for small $b > 0$.  Next, if we write 
\[ ( 1 - s_n/n )^{\alpha n} = \exp[-\alpha n\{-\log(1 - s_n/n)\}], \]
and use the approximation $-\log(1-x) = x + o(x)$, for $x \approx 0$, then we get a lower bound on the $\omega$-integral of the form:
\[ c \exp\bigl\{ -2s_n\log(n/s_n) - \alpha s_n + o(s_n) \bigr\}, \quad n \to \infty, \]
for $c=\alpha/(1+\alpha) > 0$.  Putting these pieces together, gives the lower bound 
\[ D_n > c \exp\Bigl[ \frac{s_n}{2} \{ \kappa - 2\alpha - \log(1+\kappa\sigma^2) + o(1) \} -2 s_n \log(n / s_n) \Bigr]. \]
Set $\eta = \frac{1}{2}\{\kappa - 2\alpha - \log(1+\kappa\sigma^2)\} \in \RR$ to complete the proof.  
\end{proof}

\subsection{Concentration}
\label{SS:concentration}

In the frequentist problem of estimating a $s_n$-sparse vector $\theta$ under squared $\ell_2$-error loss, it is known that the minimax rate is proportional to $\eps_n := s_n \log(n/s_n)$; see \citet{donoho1992}.  Following \citet{castillo.vaart.2012}, our goal here is to show that $Q_n$ concentrates asymptotically on $n$-balls, centered at $\theta^\star$, with square radius proportional to $\eps_n$.  More precisely, for a constant $M > 0$, let 
\[ A_{M\eps_n} = \{\theta \in \RR^n: \|\theta-\theta^\star\|^2 > M \eps_n \}; \]
then we will demonstrate that $Q_n(A_{M\eps_n}) \to 0$ with $\prob_{\theta^\star}$-probability~1.

The theorem below requires a restriction on $(\kappa,\sigma^2)$.  In particular, we require that, for some $\beta > 1$, $(\kappa,\sigma^2)$ reside in the feasible region
\begin{equation}
\label{eq:feasible}
R_\beta = \Bigl\{ (\kappa,\sigma^2): \frac{1}{\sigma^2 (1+\beta/\sigma^2)^{1/\beta}} - \frac{1}{\sigma^2+\beta} < \frac{\kappa[(1-\kappa)\beta-1]}{\beta-1} \Bigr\}.
\end{equation}
We are particularly interested in large $\beta$, so that $\kappa$ arbitrarily close to 1 can be included.  Figure~\ref{fig:feasible} displays a portion of the region $R_\beta$, for $\beta=200$.  The condition $\sigma^2=(1-\kappa)^{-1}$ discussed in Section~\ref{S:pseudo} defines the boundary of $R_\beta$, for large $\beta$ and $\kappa \approx 1$.     

\begin{figure}
\begin{center}
\scalebox{0.6}{\includegraphics{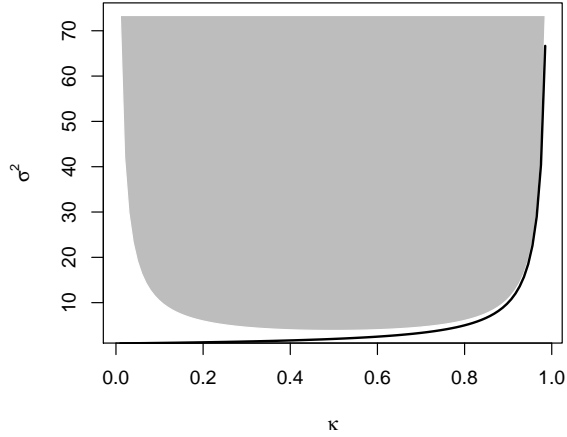}}
\end{center}
\caption{Portion of the feasible region $R_\beta$ in \eqref{eq:feasible}, with $\beta=200$, for $(\kappa,\sigma^2)$.  Solid black line corresponds to the curve $\sigma^2=(1-\kappa)^{-1}$.}
\label{fig:feasible}
\end{figure}

\begin{thm}
\label{thm:pseudo}
For any fixed $\beta > 1$, take $(\kappa,\sigma^2)$ in the feasible set $R_\beta$.  If $\theta^\star$ is $s_n$-sparse, then there exists $M > 0$ such that $Q_n(A_{M\eps_n}) \to 0$ with $\prob_{\theta^\star}$-probability~1.  
\end{thm}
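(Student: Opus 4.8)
The plan is to write $Q_n(A_{M\eps_n}) = N_n/D_n$, where $D_n$ is the denominator of \eqref{eq:pseudo} and $N_n$ its numerator with $A = A_{M\eps_n}$.  Lemma~\ref{lem:denominator} handles $D_n$: since $s_n \to \infty$ and $s_n = o(n)$ we have $\eps_n \to \infty$ and $s_n = \oas(\eps_n)$, so $D_n > \exp\{-(2 + \oas(1))\eps_n\}$ with $\prob_{\theta^\star}$-probability one.  It therefore suffices to produce a fixed constant $t = t(\kappa,\sigma^2) > 0$ for which $N_n \le \exp\{-tM\eps_n + \oas(\eps_n)\}$ almost surely; combined with the denominator bound this gives $Q_n(A_{M\eps_n}) \le \exp\{-(tM - 2 - \oas(1))\eps_n\} \to 0$ once $M$ is chosen large enough.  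The whole difficulty is thus the numerator bound, and the recurring obstacle is that the prior $\tilde\Pi_X$ in \eqref{eq:ebprior2} itself depends on $X$.

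To bound $N_n$ I would use the exponential envelope $\mathbf{1}\{\|\theta - \theta^\star\|^2 > M\eps_n\} \le e^{-tM\eps_n}\,e^{t\|\theta - \theta^\star\|^2}$, valid for any $t > 0$; the gain is that the resulting bound decouples over coordinates, so that integrating out $\omega \sim \be(\alpha n, 1)$ last yields $N_n \le e^{-tM\eps_n}\int_0^1 \alpha n\,\omega^{\alpha n - 1}\prod_{i=1}^n\{\omega a_i + (1-\omega)b_i\}\,d\omega$.  Here $b_i = b_i(t) = \int_\RR e^{t(\theta_i - \theta_i^\star)^2}\{p_{\theta_i}(X_i)/p_{\theta_i^\star}(X_i)\}^\kappa\,\nm(\theta_i \mid X_i, \sigma^2)\,d\theta_i$ is a Gaussian integral, finite precisely when $t < \tfrac12(\kappa + \sigma^{-2})$, that evaluates in closed form to $\psi(t)\,e^{\gamma(t)\xi_i^2}$ with $\xi_i = X_i - \theta_i^\star$, $\psi(t) = \{\sigma^2(\kappa + \sigma^{-2} - 2t)\}^{-1/2}$ and $\gamma(t) = \{\kappa(\kappa + \sigma^{-2}) + 2t\sigma^{-2}\}/\{2(\kappa + \sigma^{-2} - 2t)\}$; for $i \notin \S^\star$ one has $a_i = 1$, and for $i \in \S^\star$ one has $a_i = e^{(t - \kappa/2)(\theta_i^\star)^2 - \kappa\xi_i\theta_i^\star}$.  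Note that the $\xi_i$ are i.i.d.\ $\nm(0,1)$, and that $\psi(0) = (1 + \kappa\sigma^2)^{-1/2}$, $\gamma(0) = \kappa/2$.

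The next step is to control the two blocks of the product separately and, since $M$ is free, to take $t > 0$ small.  Over $i \in \S^\star$, once $t < \kappa/2$ Young's inequality on the cross term gives $a_i \le \exp\{-c_1(\theta_i^\star)^2 + c_2\xi_i^2\}$ with $c_1 > 0$, so $\prod_{i \in \S^\star}\{\omega a_i + (1-\omega)b_i\} \le \prod_{i \in \S^\star}(a_i + b_i) \le (1 + \psi(t))^{s_n}\exp\{\max(c_2,\gamma(t))\sum_{i\in\S^\star}\xi_i^2\}$, which by the strong law ($\sum_{i \in \S^\star}\xi_i^2 = s_n + \oas(s_n)$) is $\exp\{\Oas(s_n)\} = \exp\{\oas(\eps_n)\}$; in particular the signal magnitudes $\|\theta^\star\|$ do not hurt.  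Over $i \notin \S^\star$ I would use $\omega + (1-\omega)b_i = 1 + (1-\omega)(b_i - 1) \le \exp\{(1-\omega)(b_i(t) - 1)\}$, so $\prod_{i \notin \S^\star}\{\cdots\} \le \exp\{(1-\omega)\sum_{i \notin \S^\star}(b_i(t) - 1)\}$; provided $\gamma(t) < \tfrac12$ the summands are i.i.d.\ with finite mean, and the strong law gives $\sum_{i \notin \S^\star}(b_i(t) - 1) = c(t)(n - s_n)(1 + \oas(1))$ with $c(t) = \psi(t)/\sqrt{1 - 2\gamma(t)} - 1$.  The key fact is that $c(0) = \{(1-\kappa)(1 + \kappa\sigma^2)\}^{-1/2} - 1$ is negative exactly when $\sigma^2 > (1-\kappa)^{-1}$ — the very curve the paper flags as the boundary of the feasible region — and the condition $(\kappa,\sigma^2) \in R_\beta$ in \eqref{eq:feasible} is precisely what lets one pick $t > 0$ (also satisfying $t < \kappa/2$ and $t < \tfrac12(\kappa + \sigma^{-2})$) with $c(t) < 0$, the parameter $\beta$ supplying the quantitative margin that lets $\kappa$ approach $1$.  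With $c(t) < 0$ the quantity $\sum_{i \notin \S^\star}(b_i(t) - 1)$ is eventually negative almost surely, so the block over $i \notin \S^\star$ is eventually bounded by $1$ for every $\omega$, whence $\int_0^1\alpha n\,\omega^{\alpha n - 1}\prod_i\{\omega a_i + (1-\omega)b_i\}\,d\omega \le \exp\{\oas(\eps_n)\}$ and $N_n \le \exp\{-tM\eps_n + \oas(\eps_n)\}$.

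I expect the main obstacle to be exactly this block over the non-signal coordinates together with the data-dependence of the prior: because the Gaussian component of $\tilde\Pi_X$ is centred at $X_i$ rather than at $0$, a noise coordinate with an atypically large $|X_i|$ contributes prior mass at a spurious large signal that the fractional likelihood damps only mildly, and without the sign condition $c(t) < 0$ — equivalently, without confining $(\kappa,\sigma^2)$ to $R_\beta$, and without the $\omega^{\alpha n - 1}$ factor keeping $\omega$ near $1$ — the aggregate of these contributions is exponentially large rather than $\exp\{\oas(\eps_n)\}$; this is the quantitative face of the paper's remark that too-light prior tails fail.  The remaining work is routine but must be done carefully: turning each ``$\oas$'' into an honest probability-one statement via the strong law for the i.i.d.\ averages of $\xi_i^2$ over $\S^\star$ and of $b_i(t) - 1$ over its complement, and passing from ``eventually'' to the stated conclusion by Borel--Cantelli, using $\eps_n \gtrsim \log n$ for summability.
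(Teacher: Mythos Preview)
Your approach is genuinely different from the paper's, and largely sound, but there is a real gap in how you connect your sufficient condition to the hypothesis $(\kappa,\sigma^2)\in R_\beta$ of the theorem.

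\textbf{How the paper proceeds.} The paper does not work pathwise. It takes $\E_{\theta^\star}(N_n)$, which integrates out $X$ and thereby eliminates the data-dependence of the prior in one stroke. Each factor becomes a measure $J_\omega(d\theta_i)$ in $\theta_i$ alone; the discrete piece is handled by a R\'enyi divergence identity, while the continuous piece is bounded via H\"older with conjugate exponents $\beta/(\beta-1)$ and $\beta$. The feasible region $R_\beta$ is \emph{exactly} the condition that the exponent of $(\theta_i-\theta_i^\star)^2$ emerging from that H\"older step be negative. One then gets $\E_{\theta^\star}(N_n)\le e^{-cM\eps_n}$, and Markov plus Borel--Cantelli finishes.

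\textbf{What your argument actually needs.} Your pathwise route with the envelope $e^{-tM\eps_n}e^{t\|\theta-\theta^\star\|^2}$ and the strong law on both blocks is correct in outline, and the Gaussian calculus for $b_i$ is right. But the condition your proof uses is that one can choose $t>0$ with $c(t)<0$; by continuity this is exactly $c(0)<0$, i.e.\ $\sigma^2>(1-\kappa)^{-1}$. Your assertion that ``$(\kappa,\sigma^2)\in R_\beta$ is precisely what lets one pick $t>0$ with $c(t)<0$, the parameter $\beta$ supplying the quantitative margin'' misreads the role of $\beta$: in the paper $\beta$ is a H\"older exponent and nothing more, and it does not correspond to your $t$. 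To prove the theorem \emph{as stated} you must show $R_\beta\subseteq\{\sigma^2>(1-\kappa)^{-1}\}$; this appears to be true (and your argument would then give a strictly larger region of validity than the paper's, making $R_\beta$ an artifact of the H\"older step), but you have not established it. Without that inclusion your argument proves a related theorem with a different hypothesis.

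\textbf{Minor points.} The final Borel--Cantelli step you mention is unnecessary: once you have almost-sure control of both $N_n$ and $D_n$ via the strong law, the ratio tends to zero almost surely directly. Also, your strong-law applications to sums over $\S^\star$ and its complement are at the same informal level as the paper's Lemma~\ref{lem:denominator}, which invokes the law of large numbers on $\sum_{i\in\S^\star}\xi_i^2$; they are acceptable here but assume an implicit sequential coupling of the noise across $n$.
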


\begin{proof}
Let $N_n$ be the numerator for $Q_n(A_{M\eps_n})$ in \eqref{eq:pseudo}, i.e.,   
\[ N_n = \int_0^1 \int_{A_{M\eps_n}} \prod_{i=1}^n \Bigl( \frac{p_{\theta_i}(X_i)}{p_{\theta_i^\star}(X_i)} \Bigr)^\kappa \,\tilde\Pi_{X_i,\omega}(d\theta_i) \, \tilde\pi(d\omega). \]
Taking expectation of $N_n$, with respect to $\prob_{\theta^\star}$, we get 
\[ \E_{\theta^\star}(N_n) = \int_0^1 \int_{A_{M\eps_n}} \prod_{i=1}^n \int_{\RR} \Bigl( \frac{p_{\theta_i}(x_i)}{p_{\theta_i^\star}(x_i)} \Bigr)^\kappa \,\tilde\Pi_{x_i,\omega}(d\theta_i) p_{\theta_i^\star}(x_i) \,dx_i \, \tilde\pi(d\omega). \]
Write $J_\omega(d\theta_i)$ for the measure defined in the $i$-th product term.  Split this into discrete and continuous pieces:
\begin{align*}
J_\omega(d\theta_i) & = \int \Bigl( \frac{p_{\theta_i}(x_i)}{p_{\theta_i^\star}(x_i)} \Bigr)^\kappa \,\tilde\Pi_{x_i,\omega}(d\theta_i) p_{\theta_i^\star}(x_i) \,dx_i \\
& = \omega \Bigl\{ \int \Bigl( \frac{p_{\theta_i}(x_i)}{p_{\theta_i^\star}(x_i)} \Bigr)^\kappa p_{\theta_i^\star}(x_i) \,dx_i \Bigr\} \, \delta_0(d\theta_i) \\
& \qquad\qquad\qquad\qquad + (1-\omega) \Bigl\{ \int \Bigl( \frac{p_{\theta_i}(x_i)}{p_{\theta_i^\star}(x_i)} \Bigr)^\kappa \frac{p_{\theta_i/\sigma}(x_i/\sigma)}{\sigma} p_{\theta_i^\star}(x_i) \,dx_i \Bigr\} \, d\theta_i.
\end{align*}
For clarity, we shall work with the discrete and continuous parts separately.

\emph{Discrete part}.  Using the Renyi divergence formula for normal distributions, the discrete term simplifies to $\omega \exp\{-\frac{\kappa(1-\kappa)}{2} (\theta_i-\theta_i^\star)^2\} \, \delta_0(d\theta_i)$.  

\emph{Continuous part}. An application of H\"older's inequality, with coefficients $\frac{\beta}{\beta-1}$ and $\beta$, whose reciprocals sum to one, gives 
\begin{align*}
\int \Bigl( \frac{p_{\theta_i}(x_i)}{p_{\theta_i^\star}(x_i)} & \Bigr)^\kappa \frac{p_{\theta_i/\sigma}(x_i/\sigma)}{\sigma}  p_{\theta_i^\star}(x_i) \,dx_i \\
& \leq \Bigl\{ \int \Bigl( \frac{p_{\theta_i}(x_i)}{p_{\theta_i^\star}(x_i)} \Bigr)^{\frac{\kappa\beta}{\beta-1}} p_{\theta_i^\star}(x_i) \,dx_i \Bigr\}^{\frac{\beta-1}{\beta}} \Bigl\{ \int \Bigl( \frac{p_{\theta_i/\sigma}(x_i/\sigma)}{\sigma} \Bigr)^\beta  p_{\theta_i^\star}(x_i) \,dx_i \Bigr\}^{\frac{1}{\beta}}. 
\end{align*}
For $(\kappa,\sigma^2) \in R_\beta$, we have $\frac{\kappa\beta}{\beta-1} < 1$.  Then the same Renyi divergence formula used above gives $\exp\{-\frac{\kappa}{2}\frac{\beta(1-\kappa)-1}{\beta-1} (\theta_i-\theta_i^\star)^2\}$.  The second term in the upper bound equals
\[ \frac{1}{\sqrt{2\pi \sigma^2}} \Bigl\{ \frac{\sigma}{(\sigma^2+\beta)^{1/2}} e^{-\frac{\beta}{2(\sigma^2+\beta)} (\theta_i-\theta_i^\star)^2} \Bigr\}^{1/\beta}. \]
After some tedious algebra, this can be rewritten as 
\[ \exp\Bigl\{\frac12 \Bigl(\frac{1}{\sigma^2 (1+\beta/\sigma^2)^{1/\beta}} - \frac{1}{\sigma^2+\beta} \Bigr)(\theta_i-\theta_i^\star)^2 \Bigr\} \nm(\theta_i \mid \theta_i^\star, \sigma^2 (1+\beta/\sigma^2)^{1/\beta}). \]
Combining the two terms in the upper bound, ignoring the normal density, gives 
\[ \exp\Bigl[ -\frac12\Bigl\{ \frac{\kappa[(1-\kappa)\beta-1]}{\beta-1} - \Bigl(\frac{1}{\sigma^2 (1+\beta/\sigma^2)^{1/\beta}} - \frac{1}{\sigma^2+\beta} \Bigr) \Bigr\} (\theta_i-\theta_i^\star)^2 \Bigr] . \]
For $(\kappa,\sigma^2)$ in the feasible region $R_\beta$ in \eqref{eq:feasible}, the coefficient on $(\theta_i-\theta_i^\star)^2$ in the exponential term above is negative.  

We can now find a constant $c > 0$, depending on $(\kappa,\sigma^2, \beta)$, such that 
\[ J_\omega(d\theta_i) \leq e^{-c(\theta_i-\theta_i^\star)^2} \{\omega \delta_0(d\theta_i) + (1-\omega) \nm(\theta_i \mid \theta_i^\star, \sigma^2 (1+\beta/\sigma^2)^{1/\beta}) \,d\theta_i\}. \]
Then $J_\omega^n(d\theta) := \prod_{i=1}^n J_\omega(d\theta_i)$ is upper bounded by $\exp\{-c\|\theta-\theta^\star\|^2\}$ times a probability measure in $\theta$ on $\RR^n$.  Therefore, by definition of $A_{M\eps_n}$, 
\[ \E_{\theta^\star}(N_n) = \int_0^1 \int_{A_{M\eps_n}} J_\omega^n(d\theta) \, \tilde\pi(d\omega) \leq e^{-c M \eps_n}. \]
Next, take $M$ such that $c M > 2$, and then take $K \in (2, cM)$.  Then Markov's inequality gives the upper bound
\[ \prob_{\theta^\star}(N_n > e^{-K\eps_n}) \leq L e^{-(cM-K)\eps_n}. \]
This upper bound has a finite sum over $n \geq 1$, so the Borel--Cantelli lemma gives that $N_n \leq e^{-K\eps_n}$, with $\prob_{\theta^\star}$-probability~1 for all large $n$.  Putting together this bound on $N_n$ and the one on $D_n$ from Lemma~\ref{lem:denominator}, we get 
\begin{equation}
\label{eq:bound}
\frac{N_n}{D_n} \leq \frac{1+\alpha}{\alpha} e^{-(K-2)\eps_n - \eta s_n + o(s_n)}. 
\end{equation}
Since $s_n = o(\eps_n)$, the exponent diverges to $-\infty$ regardless of the sign on $\eta$.  Therefore, $Q_n(A_{M\eps_n}) \to 0$ as $n \to \infty$ with $\prob_{\theta^\star}$-probability~1.
\end{proof}

\begin{remark}
\label{re:beta}
The $\eps_n$ concentration rate is driven primarily by the beta prior on the weight $\omega$.  In particular, it comes from the term $(s_n / n)^{2s_n}$ in the lower bound \eqref{eq:rate} in Lemma~\ref{lem:denominator}.  This means that the prior for $\theta$, given $\omega$, should be selected so that it does not interfere with the correct rate coming from the lower bound on the denominator of $Q_n$.    
\end{remark}

\begin{remark}
\label{re:interfere}
\citet{castillo.vaart.2012} show that the minimax concentration rate will not hold if the prior on non-zero $\theta$ has too light of tails, e.g., Gaussian.  A way to understand this point, from our perspective, is that the Gaussian conditional prior interferes with what the beta prior for the weight $\omega$ is doing.  As we have demonstrated, this does not necessarily mean that Gaussian is wrong, but that some adjustments should be made to prevent this interference.   
\end{remark}


\subsection{Asymptotic minimaxity of the posterior mean}

Since the empirical Bayes posterior concentrates around the right place and the right rate, it ought to produce an estimator of $\theta$ with good properties.  For this problem, perhaps the most natural choice of estimator is the empirical Bayes posterior mean, 
\[ \hat\theta_n = \int \theta \, Q_n(d\theta) \]
Next we show that $\hat\theta_n$ is a minimax estimator if $\theta^\star$ is $s_n$-sparse.  

\begin{thm}
\label{thm:minimax}
Take $(\kappa,\sigma^2)$ as in Theorem~\ref{thm:pseudo}.  If $\theta^\star$ is $s_n$-sparse, then there exists a universal constant $M' > 0$ such that $\E_{\theta^\star}\|\hat\theta_n - \theta^\star\|^2 \leq M' \eps_n$ for all large $n$.  
\end{thm}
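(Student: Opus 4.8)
The plan is to bound $\E_{\theta^\star}\|\hat\theta_n - \theta^\star\|^2$ by splitting the expectation according to whether the concentration event from Theorem~\ref{thm:pseudo} holds. Write $B_n = A_{M\eps_n}^c = \{\theta: \|\theta-\theta^\star\|^2 \le M\eps_n\}$ for the ball on which $Q_n$ concentrates. By Jensen's inequality applied to the posterior mean, $\|\hat\theta_n - \theta^\star\|^2 = \|\int(\theta-\theta^\star)\,Q_n(d\theta)\|^2 \le \int \|\theta-\theta^\star\|^2\,Q_n(d\theta)$. I would then decompose this integral as $\int_{B_n} \|\theta-\theta^\star\|^2\,Q_n(d\theta) + \int_{A_{M\eps_n}} \|\theta-\theta^\star\|^2\,Q_n(d\theta)$. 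The first term is automatically at most $M\eps_n$, so the whole problem reduces to controlling the expectation of the second, ``tail,'' term.

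For the tail term, the idea is to reuse the machinery from the proof of Theorem~\ref{thm:pseudo} but with the extra factor $\|\theta-\theta^\star\|^2$ in the integrand. Concretely, $\E_{\theta^\star}\bigl[\int_{A_{M\eps_n}} \|\theta-\theta^\star\|^2\,Q_n(d\theta)\bigr] = \E_{\theta^\star}(N_n'/D_n)$, where $N_n'$ is the same numerator as $N_n$ but with $\|\theta-\theta^\star\|^2$ inserted, and $D_n$ is the denominator controlled in Lemma~\ref{lem:denominator}. Taking expectation of $N_n'$ and carrying out the same Fubini/Hölder/Renyi-divergence computation as in Theorem~\ref{thm:pseudo}, one gets $\E_{\theta^\star}(N_n') \le \int_{A_{M\eps_n}} \|\theta-\theta^\star\|^2 e^{-c\|\theta-\theta^\star\|^2}\,(\text{prob.\ measure})$, and since $t\mapsto t e^{-ct}$ is bounded (indeed decays like $e^{-cM\eps_n/2}\cdot$ polynomial once $t \ge M\eps_n$ with $M$ large), this is at most something like $C\eps_n e^{-cM\eps_n/2}$. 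To turn the expectation bound into an almost-sure-type bound usable with the lower bound on $D_n$, I would again apply Markov's inequality to $N_n'$ at level $e^{-K\eps_n}$ with $K$ chosen in $(2, cM)$, obtaining $\prob_{\theta^\star}(N_n' > \eps_n e^{-K\eps_n}) \le C e^{-(cM/2 - K)\eps_n}$, summable in $n$, so Borel--Cantelli gives $N_n' \le \eps_n e^{-K\eps_n}$ eventually a.s. Combined with $D_n \ge c\exp\{\eta s_n - 2s_n\log(n/s_n) + o(s_n)\}$ and $s_n = o(\eps_n)$, the ratio $N_n'/D_n$ is eventually bounded by $\eps_n e^{-(K-2)\eps_n - \eta s_n + o(s_n)} \to 0$, a.s.

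The one genuine gap in the above is that Borel--Cantelli controls the tail term only for $n$ beyond some \emph{random} $N$, whereas the theorem asks for a bound on $\E_{\theta^\star}\|\hat\theta_n-\theta^\star\|^2$ that holds for all large (deterministic) $n$. So the main obstacle, and the part requiring the most care, is converting the pathwise control into an honest bound on the unconditional expectation. I expect to handle this by a uniform-integrability / direct-expectation argument rather than Borel--Cantelli: bound $\E_{\theta^\star}(N_n'/D_n)$ directly by splitting on the event $G_n = \{D_n \ge c\exp(\eta s_n - 2s_n\log(n/s_n) - \delta s_n)\}$ whose complement has exponentially small probability (this needs the law-of-large-numbers step in Lemma~\ref{lem:denominator} to be upgraded to an exponential concentration bound for $s_n^{-1}\sum_{i\in\S^\star}(X_i-\theta_i^\star)^2$, which holds since these are i.i.d.\ $\chi^2_1$ variables), and on $G_n$ using $\E_{\theta^\star}(N_n' \mathbf{1}_{G_n}/D_n) \le \E_{\theta^\star}(N_n')/\inf_{G_n} D_n$; on $G_n^c$ one crudely bounds $N_n'/D_n$ and multiplies by the tiny probability $\prob(G_n^c)$, using that $N_n'$ has at most polynomially-growing expectation and $D_n$ is bounded below by a trivial (much smaller, but still positive) deterministic quantity. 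Putting these together yields $\E_{\theta^\star}\|\hat\theta_n - \theta^\star\|^2 \le M\eps_n + o(\eps_n) \le M'\eps_n$ for a universal $M'$ and all large $n$, which is the claim.
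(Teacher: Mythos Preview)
Your approach is essentially the paper's: apply Jensen to get $\|\hat\theta_n-\theta^\star\|^2 \le \int\|\theta-\theta^\star\|^2\,Q_n(d\theta)$, split over $A_{M\eps_n}$ and its complement, bound the good piece by $M\eps_n$, and control the tail by inserting $\|\theta-\theta^\star\|^2$ into the numerator and reusing the H\"older/Renyi machinery of Theorem~\ref{thm:pseudo}. The paper does not take your Borel--Cantelli detour; it goes straight to the expectation, noting that $t\mapsto t e^{-ct}$ is eventually decreasing so that $\E_{\theta^\star}(N_n')\le M\eps_n e^{-cM\eps_n}$, and then simply divides by the Lemma~\ref{lem:denominator} lower bound on $D_n$ to obtain $\E_{\theta^\star}\int_{A_{M\eps_n}}\|\theta-\theta^\star\|^2\,Q_n(d\theta)\le M\eps_n e^{-\nu\eps_n}$, whence $M'=2M$ works.

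The subtlety you flag---that the a.s.\ lower bound on $D_n$ from Lemma~\ref{lem:denominator} is available only beyond a random index, whereas the theorem requires a deterministic bound on the expectation---is real, and the paper handles it less carefully than you do: it writes ``Take $n$ large enough that, with $\prob_{\theta^\star}$-probability~1, the lower bound in the lemma holds'' and then treats $D_n$ as deterministically bounded below. Your proposed remedy is the right one: the only random ingredient in the lemma's bound is $s_n^{-1}\sum_{i\in\S^\star}(X_i-\theta_i^\star)^2$, a mean of i.i.d.\ $\chi^2_1$ variables, so a standard sub-exponential concentration inequality upgrades the law-of-large-numbers step to $\prob_{\theta^\star}(G_n^c)\le e^{-c' s_n}$, after which the split $\E[N_n'/D_n]=\E[N_n'\mathbf{1}_{G_n}/D_n]+\E[N_n'\mathbf{1}_{G_n^c}/D_n]$ goes through cleanly. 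So your proposal is correct and, on this point, more rigorous than the published argument.
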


\begin{proof}
Start by considering the quantity $\int \|\theta-\theta^\star\|^2 \, Q_n(d\theta)$.  Split this integral into two via the partition $\RR^n = A_{M\eps_n} \cup A_{M\eps_n}^c$ for $M$ as in Theorem~\ref{thm:pseudo}.  On $A_{M\eps_n}^c$, $\|\theta-\theta^\star\|^2$ is bounded above by $M\eps_n$, and $Q_n(A_{M\eps_n}^c) \leq 1$ trivially.  So, we immediately get 
\[ \int_{A_{M\eps_n}^c} \|\theta-\theta^\star\|^2 \, Q_n(d\theta) \leq M \eps_n. \]
For the integration over $A_{M\eps_n}$, we again look at the numerator and denominator of $Q_n$ separately, as in the previous subsection.  The denominator has the same lower bound as in Lemma~\ref{lem:denominator}.  Take $n$ large enough that, with $\prob_{\theta^\star}$-probability~1, the lower bound in the lemma holds; then the expectation of the ratio can be bounded by upper bounding the expectation of the numerator, together with the lower bound on the denominator.  Expectation of the numerator, with respect to $\prob_{\theta^\star}^n$, proceeds just like in the proof of Theorem~\ref{thm:pseudo}.  This time, we get
\[ \int_0^1 \int_{A_{M\eps_n}} \|\theta-\theta^\star\|^2 J_\omega^n(d\theta) \, \tilde\pi(d\omega), \]
where $J_\omega^n(d\theta)$ is $\exp(-c\|\theta-\theta^\star\|^2)$ times a probability measure for $\theta$ in $\RR^n$, just as in the proof of Theorem~\ref{thm:pseudo}.  Since the function $x \mapsto x e^{-c x}$ is monotonically decreasing for large enough $x$, we can see that, for large $n$, $\|\theta-\theta^\star\|^2 \exp(-c\|\theta-\theta^\star\|^2) < M\eps_n \exp(-cM\eps_n)$ on $A_{M\eps_n}$.  Therefore, the expectation is eventually bounded by $M\eps_n \exp(-cM\eps_n)$.  Combining this with the lemma's lower bound, we can find $\nu > 0$ such that, for large $n$, 
\[ \E_{\theta^\star} \int_{A_{M\eps_n}} \|\theta-\theta^\star\|^2 \, Q_n(d\theta) \leq M \eps_n e^{-\nu \eps_n}. \]
But $\|\hat\theta_n - \theta^\star\|^2 \leq \int \|\theta-\theta^\star\|^2 \, Q_n(d\theta)$ by Jensen's inequality, so $\E_{\theta^\star}\|\hat\theta-\theta^\star\|^2 \leq M\eps_n(1 + e^{-\nu\eps_n})$.  Take $M'=2M$ to complete the proof.
\end{proof}

\subsection{Effective posterior dimension}
\label{SS:dimension}

Besides posterior concentration around $\theta^\star$ at the minimax rate, it is desirable if the majority of the posterior mass is concentrated in a roughly $s_n$-dimensional subspace of $\RR^n$, where it is presumed that $\theta^\star$ resides.  \citet{castillo.vaart.2012} show that their fully Bayes posteriors have effective dimension proportional to $s_n$.  An interesting question, therefore, is if a similar result obtains for our empirical Bayes posterior.  In this section we show that, under the conditions of Theorems~\ref{thm:pseudo}--\ref{thm:minimax}, the posterior distribution for $1-\omega$ puts vanishingly small mass above $s_n n^{-1}$ (up to a logarithmic factor), so that $\omega$ tends to concentrate around $1-s_n n^{-1}$.  That this provides some information about the effective dimension of the posterior can be seen from the following expression:
\begin{equation}
\label{eq:alpha}
\E(\omega \mid X) = \frac{\alpha}{\alpha + 1 + n^{-1}} + \frac{1}{\alpha + 1 + n^{-1}} \frac{\E(D_\theta \mid X)}{n}, 
\end{equation}
where $D_\theta = \#\{i: \theta_i = 0\}$; this fact derives from the full conditionals in Section~\ref{SS:algorithm} below.  So, if $\alpha$ is not too large, and $\omega$ concentrates around $1-s_n n^{-1}$, then $D_\theta$ concentrates around $n-s_n$.  Therefore, the posterior distribution for $\theta$ must reside on a space with effective dimension proportional to $s_n$.    

\begin{thm}
\label{thm:dimension}
Let $\delta_n = K \eps_n n^{-1}$, where $\eps_n = s_n \log(n/s_n)$ as before, and $K > 0$ is a suitably large constant.  Then, under the conditions of Theorem~\ref{thm:pseudo}, 
\[ \E_{\theta^\star}\{\prob(1-\omega > \delta_n \mid X)\} \to 0 \quad \text{as $n \to \infty$}. \]
\end{thm}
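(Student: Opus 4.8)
The plan is to mimic the proofs of Lemma~\ref{lem:denominator} and Theorem~\ref{thm:pseudo}, writing the posterior probability as a ratio and controlling its numerator and denominator separately. Since $Q_n$ is the joint posterior for $(\theta,\omega)$, we have $\prob(1-\omega > \delta_n \mid X) = \tilde N_n / D_n$, where $D_n$ is the denominator of $Q_n$ from \eqref{eq:pseudo} and
\[ \tilde N_n = \int_{\{1-\omega > \delta_n\}} \int_{\RR^n} \Bigl\{ \frac{p_\theta^n(X)}{p_{\theta^\star}^n(X)} \Bigr\}^\kappa \,\tilde\Pi_{X,\omega}(d\theta) \,\tilde\pi(d\omega). \]
The denominator already has the $\prob_{\theta^\star}$-almost-sure lower bound of Lemma~\ref{lem:denominator}, so essentially all of the work is in bounding $\tilde N_n$.

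For the numerator, I would push $\E_{\theta^\star}$ inside exactly as in the proof of Theorem~\ref{thm:pseudo}, which yields $\E_{\theta^\star}(\tilde N_n) = \int_{\{1-\omega>\delta_n\}} \int_{\RR^n} J_\omega^n(d\theta)\,\tilde\pi(d\omega)$ with $J_\omega^n$ as in that proof. Since $J_\omega^n(d\theta)$ is bounded by $\exp(-c\|\theta-\theta^\star\|^2)$ times a probability measure on $\RR^n$, the inner integral is at most $1$, so $\E_{\theta^\star}(\tilde N_n) \le \tilde\pi(1-\omega > \delta_n)$. Because $\omega \sim \be(\alpha n, 1)$ under $\tilde\pi$, this tail equals $(1-\delta_n)^{\alpha n} \le e^{-\alpha n \delta_n} = e^{-\alpha K \eps_n}$, using $\log(1-x) \le -x$ and $n\delta_n = K\eps_n$. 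Now choose $K$ large enough that $\alpha K > 2$ and fix $K' \in (2, \alpha K)$. Markov's inequality gives $\prob_{\theta^\star}(\tilde N_n > e^{-K'\eps_n}) \le e^{-(\alpha K - K')\eps_n}$, which is summable over $n$ since $\eps_n = s_n \log(n/s_n)$ grows faster than $\log n$ (because $s_n \to \infty$ and $s_n = o(n)$); so the Borel--Cantelli lemma gives $\tilde N_n \le e^{-K'\eps_n}$ with $\prob_{\theta^\star}$-probability~1 for all large $n$.

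Combining this with the bound $D_n > \frac{\alpha}{1+\alpha}\exp\{\eta s_n - 2 s_n\log(n/s_n) + o(s_n)\}$ of Lemma~\ref{lem:denominator}, and using $2 s_n\log(n/s_n) = 2\eps_n$, we get
\[ \prob(1-\omega > \delta_n \mid X) = \frac{\tilde N_n}{D_n} \le \frac{1+\alpha}{\alpha} \exp\{-(K'-2)\eps_n - \eta s_n + o(s_n)\} \]
with $\prob_{\theta^\star}$-probability~1 for all large $n$. Since $K' > 2$ and $s_n = o(\eps_n)$, the exponent diverges to $-\infty$ regardless of the sign of $\eta$, so $\prob(1-\omega > \delta_n \mid X) \to 0$ almost surely; as this quantity is bounded by $1$, the bounded convergence theorem then gives $\E_{\theta^\star}\{\prob(1-\omega > \delta_n \mid X)\} \to 0$.

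The main obstacle is not conceptual but a matter of getting the constants to line up: the denominator lower bound only decays like $e^{-2\eps_n}$ up to lower-order terms, so one must force the numerator below $e^{-K'\eps_n}$ with $K' > 2$, which is exactly why the theorem asks for a ``suitably large'' $K$ (we need $\alpha K > 2$). One should also verify that the Borel--Cantelli series converges, which holds because $s_n \to \infty$ and $s_n = o(n)$ make $\eps_n$ grow faster than $\log n$, and note that the random $o(s_n)$ error in the $D_n$ bound is harmless since it is dominated by $(K'-2)\eps_n$.
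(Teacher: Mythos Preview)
Your proof is correct and follows essentially the same route as the paper: bound $\E_{\theta^\star}(\tilde N_n)$ by the $\be(\alpha n,1)$ tail $(1-\delta_n)^{\alpha n}\le e^{-\alpha K\eps_n}$, combine with the Lemma~\ref{lem:denominator} lower bound on $D_n$, and require $\alpha K>2$. The only difference is in the final assembly: the paper plugs the almost-sure lower bound on $D_n$ directly into the expectation to get $\E_{\theta^\star}\{\prob(1-\omega>\delta_n\mid X)\}\le e^{-(K\alpha-2)\eps_n+O(s_n)}$, whereas you first upgrade the numerator bound to an almost-sure one via Markov plus Borel--Cantelli, obtain $\tilde N_n/D_n\to 0$ almost surely, and then invoke bounded convergence. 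Your version is a touch more careful about the randomness of $D_n$, at the cost of the extra Borel--Cantelli step; both arguments reach the same conclusion with the same constants.
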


\begin{proof}
Write the numerator of $\prob(1-\omega > \delta_n \mid X)$ as 
\[ N_n = \int_0^{1-\delta_n} \int_{\RR^n} \prod_{i=1}^n \Bigl( \frac{p_{\theta_i}(X_i)}{p_{\theta_i^\star}(X_i)} \Bigr)^\kappa \, \tilde \Pi_{X_i,\omega}(d\theta_i) \, \tilde\pi(d\omega). \]
This is similar to the first display in the proof of Theorem~\ref{thm:pseudo}.  Just as in that proof, we get the following bound on the expectation: 
\[ \E_{\theta^\star}(N_n) \leq \int_0^{1-\delta_n} \prod_{i=1}^n \int_{\RR} e^{-\frac{c}{2}(\theta_i-\theta_i^\star)^2} \{\omega \delta_0(d\theta_i) + (1-\omega) \nm(d\theta_i \mid \theta_i^\star, v)\} \, \tilde\pi(d\omega), \]
where $c$ is a positive constant and $v=v(\sigma^2,\beta)$ is a variance term that depends on the particular $\sigma^2$ and $\beta$ values.  Each integral in the inside product is bounded above by 1, so we get 
\[ \E_{\theta^\star}(N_n) \leq \int_0^{1-\delta_n} \tilde \pi(d\omega) = \alpha n \int_0^{1-\delta_n} \omega^{\alpha n - 1} \,d\omega \leq e^{-\alpha n \delta_n} = e^{-K\alpha \eps_n}. \]
From Lemma~\ref{lem:denominator}, we have that the denominator of $\prob(1-\omega > \delta_n \mid X)$ is lower bounded by $\exp\{-2\eps_n + O(s_n)\}$ with probability~1 for large $n$.  So, for large $n$, we get  
\[ \E_{\theta^\star}\{\prob(1-\omega > \delta_n \mid X)\} \leq \E_{\theta^\star}(N_n) e^{2\eps_n + O(s_n)} \leq e^{-(K\alpha - 2)\eps_n + O(s_n)}. \]
If we pick $K$ such that $K\alpha > 2$, then the fact that $s_n = o(\eps_n)$ implies that this upper bound approaches zero as $n \to \infty$, proving the claim.  
\end{proof}

Since the logarithmic term $\log(n/s_n)$ is small, the practical implication of this result is that the posterior distribution of $\omega$ concentrates around $1-s_n n^{-1}$.  The simulation results displayed in Figure~\ref{fig:wpost} below confirm this.  


\section{Numerical results}
\label{S:numerical}

\subsection{Computational considerations}
\label{SS:algorithm}

Computation of the empirical Bayes posterior mean can be carried out via a simple Gibbs sampler for $\omega$ and $\theta=(\theta_1,\ldots,\theta_n)$ based on the full conditionals: 
\begin{subequations}
\label{eq:full.cond}
\begin{align}
\theta_i \mid \omega, X & \ind \begin{cases} \delta_0 & \text{with prob.~$\propto \omega e^{-\frac{\kappa}{2}X_i^2}$} \\ \nm(X_i, \frac{\sigma^2}{1+\kappa \sigma^2}) & \text{with prob.~$\propto \frac{1-\omega}{\sqrt{1+\kappa\sigma^2}}$}, \end{cases} \quad i=1,\ldots,n \label{eq:full.cond.1} \\
\omega \mid \theta, X & \sim {\sf Beta}(\alpha n + D_\theta, 1+n-D_\theta), \label{eq:full.cond.2}
\end{align}
\end{subequations}
where $D_\theta = \#\{i: \theta_i = 0\}$.  That is, first sample from the $\theta \mid \omega$ conditional posterior in \eqref{eq:full.cond.1}, then from the $\omega \mid \theta$ conditional posterior in \eqref{eq:full.cond.2}.  Repeat this process to obtain a sample from the full posterior.  R code for this Gibbs sampling procedure is available at \url{www.math.uic.edu/~rgmartin}.  Once the posterior sample is available, the empirical Bayes estimator $\hat\theta$, the posterior mean, is obtained by computing a coordinate-wise average of the posterior $\theta$ samples.  Besides the posterior mean, many other quantities of interest can be calculated.  For example, inclusion probabilities, $\prob(\theta_i \neq 0 \mid X)$, $i=1,\ldots,n$, can be easily calculated.  Also, in a function estimation problem, where $\theta_1,\ldots,\theta_n$ are coefficients attached to the fixed basis functions, the posterior samples of the unknown functions are readily available.  

Theory and experience suggest that good numerical results are obtained for large $\kappa$ and large $\sigma^2$.  Throughout, we use $\kappa=0.99$ and $\sigma^2=(1-0.99)^{-1}=100$, on the boundary of the feasible region.  For $\alpha$, \eqref{eq:alpha} suggests that relatively small values of $\alpha$ are appropriate, so that the $\omega$ posterior can learn from $X$ through $D_\theta$.  We have found that choosing $\alpha$ to be decreasing with $n$ is a reasonable choice.  (This has no consequence on the results in Theorems~\ref{thm:pseudo}--\ref{thm:dimension}.)  In particular, in the three examples below, with $n=200, 500, 1000$ we take $\alpha=0.25, 0.10, 0.05$, respectively.  Alternatively, one could use the data to choose $\alpha$.  For example, a method-of-moments estimator of $\alpha$ can be obtained as follows. First, estimate $D=D_\theta$ via universal hard thresholding, i.e., $\hat D$ equals the number of $X_i$ such that $|X_i| \leq (2 \log n)^{1/2}$.  Under the assumed prior, $D$ has a beta--binomial distribution, with expectation $n^2\alpha / (n \alpha + 1)$.  If we set this expectation equal to $\hat D$, then solving for $\alpha$ gives a method-of-moments estimator, in particular, $\hat\alpha = \hat D \{n(n-\hat D)\}^{-1}$.  In our examples below, we use the $n$-dependent but data-free choices of $\alpha$ mentioned above.

\subsection{Simulation studies}
\label{SS:sims}

For illustration, we first reproduce a simulation study presented in \citet{dunson.shrinkage}.  In particular, we take samples $X=(X_1,\ldots,X_n)$ of dimension $n=200$ from the normal mean model $X_i \sim \nm(\theta_i^\star,1)$.  Recall the sparsity level $s_n$ is the number of non-zero $\theta_i^\star$'s.  In this case, we consider $s_n=10, 20, 40$, and the signals are fixed at values $A=7,8$.  Table~\ref{table:sim1} displays estimates of the mean squared error obtained from 100 replications of $X$.  In addition to the proposed empirical Bayes posterior mean estimator (EBM), based on $\kappa=0.99$, $\sigma^2=100$, and $\alpha=0.25$, the methods being compared are a Dirichlet--Laplace estimator (DL) of \citet{dunson.shrinkage}, an empirical Bayes median estimator (EBMed) of \citet{johnstonesilverman2004}, and a fully Bayes posterior median estimator (PMed1) of \citet{castillo.vaart.2012}.  A few other methods have been considered in the literature recently, and some comments on why they are omitted from comparison here are given in Remark~\ref{re:koenker} below.  Here, we find that our proposed empirical Bayes estimator is the top performer across all these settings.    

\begin{table}
\begin{center}
\begin{tabular}{cccccccccc}
\hline
$s_n$ & & \multicolumn{2}{c}{10} & & \multicolumn{2}{c}{20} & & \multicolumn{2}{c}{40} \\
\cline{3-4} \cline{6-7} \cline{9-10} 
$A$ & & 7 & 8 & & 7 & 8 & & 7 & 8 \\
\hline
DL$_{1/n}$ & & 16 & 14 & & 33 & 31 & & 66 & 60  \\
EBMed & & 26 & 26 & & 57 & 56 & & 119 & 119 \\
PMed1 & & 23 & 22 & & 49 & 48 & & 102 & 102 \\
\\
\emph{EBM} & & {\bf 13} & {\bf 13} & & {\bf 25} & {\bf 25} & & {\bf 47} & {\bf 48} \\
\hline
\end{tabular}
\end{center}
\caption{Mean square errors, based on 100 replications, sampling $X$ of dimension $n=200$.  First three rows are from \citet{dunson.shrinkage}; last row corresponds to the proposed empirical Bayes posterior mean.  Boldface font indicates the column winner.}
\label{table:sim1}
\end{table}

Consider a single sample $X$ under the simulation setting described above, with $n=200$, where the first $s_n=10$ entries in $\theta^\star$ equal $A=7$, and the remaining entries are zero.  For the given $X$, the Gibbs sampler is run to obtain a sample from our empirical Bayes posterior distribution of $\theta$.  In Figure~\ref{fig:inclpr} we plot the posterior inclusion probability $\prob(\theta_i \neq 0 \mid X)$ as a function of the indices $i=1,\ldots,n$.  It is evident that the empirical Bayes posterior is able to clearly identify the correct model.  

\begin{figure}
\begin{center}
\scalebox{0.6}{\includegraphics{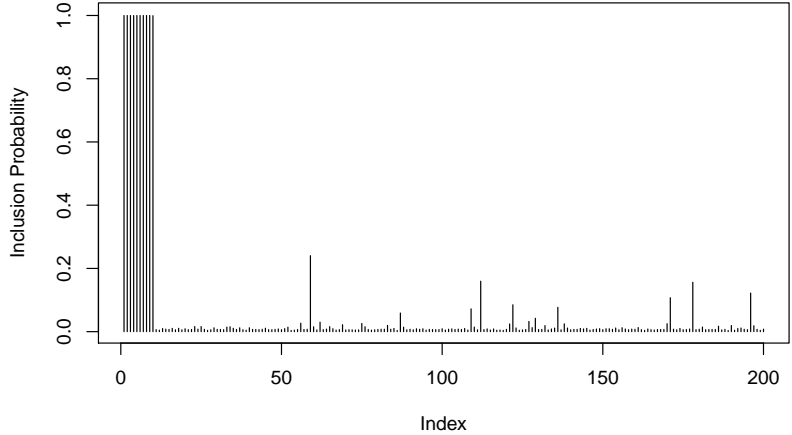}}
\end{center}
\caption{Plot of the empirical Bayes posterior inclusion probability $\prob(\theta_i \neq 0 \mid X)$ for $i=1,\ldots,n$.  Here $n=200$, $s_n=10$, and $\theta_1^\star=\cdots=\theta_{10}^\star=7$.}
\label{fig:inclpr}
\end{figure}

As a second example, we reproduce a simulation study presented in \citet{castillo.vaart.2012}.  In this case, we look at $n=500$, $s_n=25,50,100$, and signals fixed at $A=3,4,5$.  Table~\ref{table:sim2} displays estimates of the mean squared error based on 100 replications.  This time, the methods are two fully Bayes posterior mean estimates (PM1 and PM2), two fully Bayes component-wise posterior medians (PMed1 and PMed2), \citet{johnstonesilverman2004} empirical Bayes mean (EBM) and median (EBMed), and hard thresholding (HT) and hard thresholding oracle (HTO) rules.  Our proposed empirical Bayes estimator, based on $\alpha=0.10$, is competitive when $A=4$, and clearly dominates when $A=5$, just like in the previous illustration.  Interestingly, the empirical Bayes estimators are the better performers overall in this case.  


One rather unusual observation is that some of the methods have, for given $s_n$, a mean square error increasing in the signal size $A$.  We find this behavior to be counterintuitive, since it should be easier to detect stronger signals.  The two thresholding estimators have decreasing mean square error as $A$ increases, as does our proposed estimator.

\begin{table}
\begin{center}
\begin{tabular}{ccccccccccccc}
\hline
$s_n$ & & \multicolumn{3}{c}{25} & & \multicolumn{3}{c}{50} & & \multicolumn{3}{c}{100} \\
\cline{3-5} \cline{7-9} \cline{11-13} 
$A$ & & 3 & 4 & 5 & & 3 & 4 & 5 & & 3 & 4 & 5 \\
\hline
PM1 & & 111 & 96 & 94 & & 176 & 165 & 154 & & 267 & 302 & 307 \\
PM2 & & 106 & 92 & 82 & & 169 & 165 & 152 & & 269 & 280 & 274 \\
EBM & & {\bf 103} & 96 & 93 & & 166 & 177 & 174 & & 271 & 312 & 319 \\
\\
PMed1 & & 129 & 83 & 73 & & 205 & 149 & 130 & & 255 & 279 & 283 \\
PMed2 & & 125 & 86 & 68 & & 187 & {\bf 148} & 129 & & 273 & 254 & 245 \\
EBMed & & 110 & {\bf 81} & 72 & & {\bf 162} & {\bf 148} & 142 & & {\bf 255} & 294 & 300 \\
\\
HT & & 175 & 142 & 70 & & 339 & 284 & 135 & & 676 & 564 & 252 \\
HTO & & 136 & 92 & 84 & & 206 & 159 & 139 & & 306 & 261 & 245 \\
\\
\emph{EBM} & & 139 & 99 & {\bf 54} & & 237 & 159 & {\bf 89} & & 386 & {\bf 245} & {\bf 152} \\
\hline
\end{tabular}
\end{center}
\caption{Mean square errors, based on 100 replications, sampling $X$ of dimension $n=500$.  First eight rows are from \citet{castillo.vaart.2012}; last row corresponds to the proposed empirical Bayes posterior mean.  Boldface font indicates the column winner.}
\label{table:sim2}
\end{table}

To follow up on the mean square error results in Table~\ref{table:sim2}, we also display the posterior distribution of $\omega$ for two separate runs.  As indicated from Theorem~\ref{thm:dimension}, the posterior distribution for $\omega$ should concentrate around $1-s_n n^{-1}$.  For both cases in Figure~\ref{fig:wpost}, the posterior is concentrated exactly where we expect that it would be.

\begin{figure}
\begin{center}
\subfigure[$s_n=50$, so $1-\frac{s_n}{n}=0.90$]{\scalebox{0.55}{\includegraphics{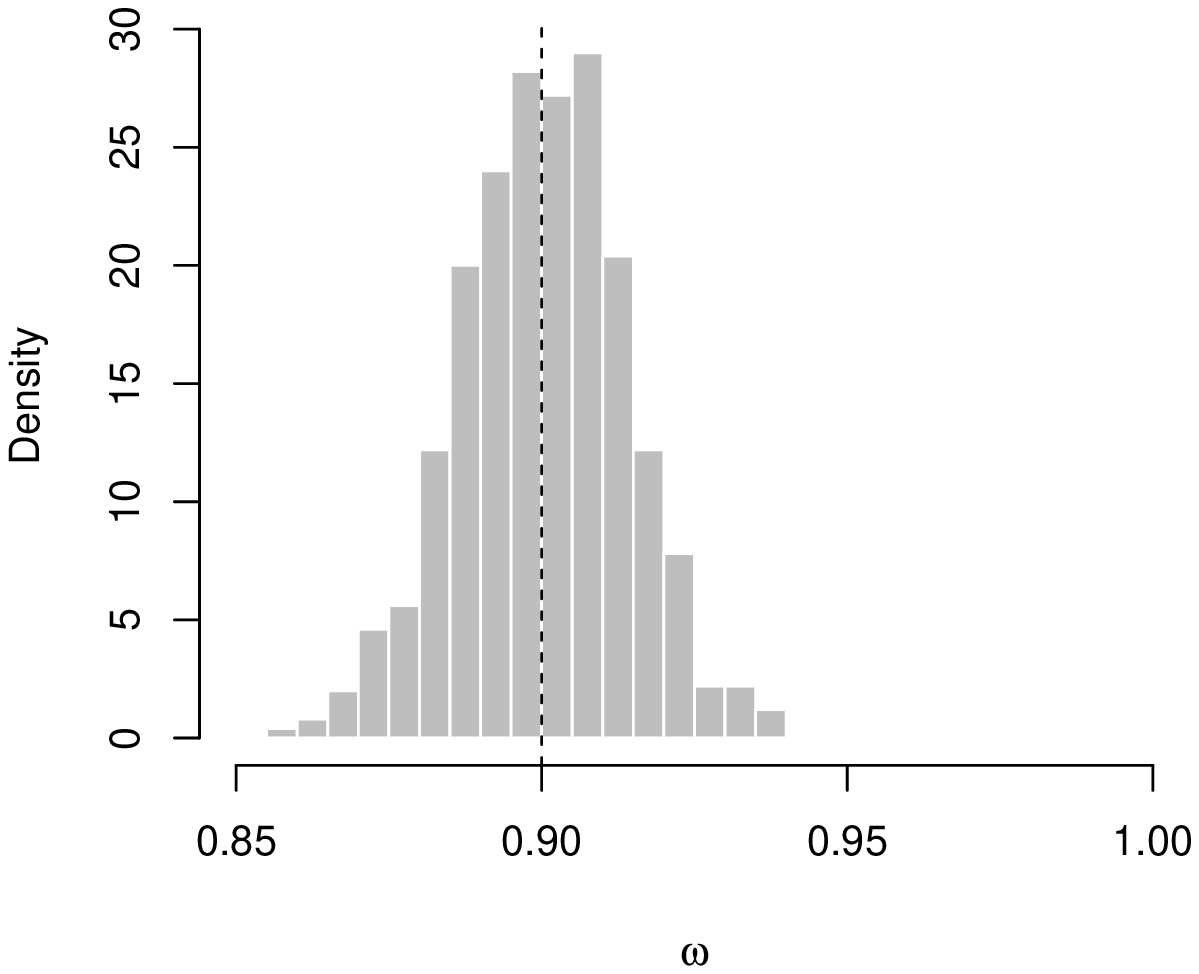}}}
\subfigure[$s_n=25$, so $1-\frac{s_n}{n}=0.95$]{\scalebox{0.55}{\includegraphics{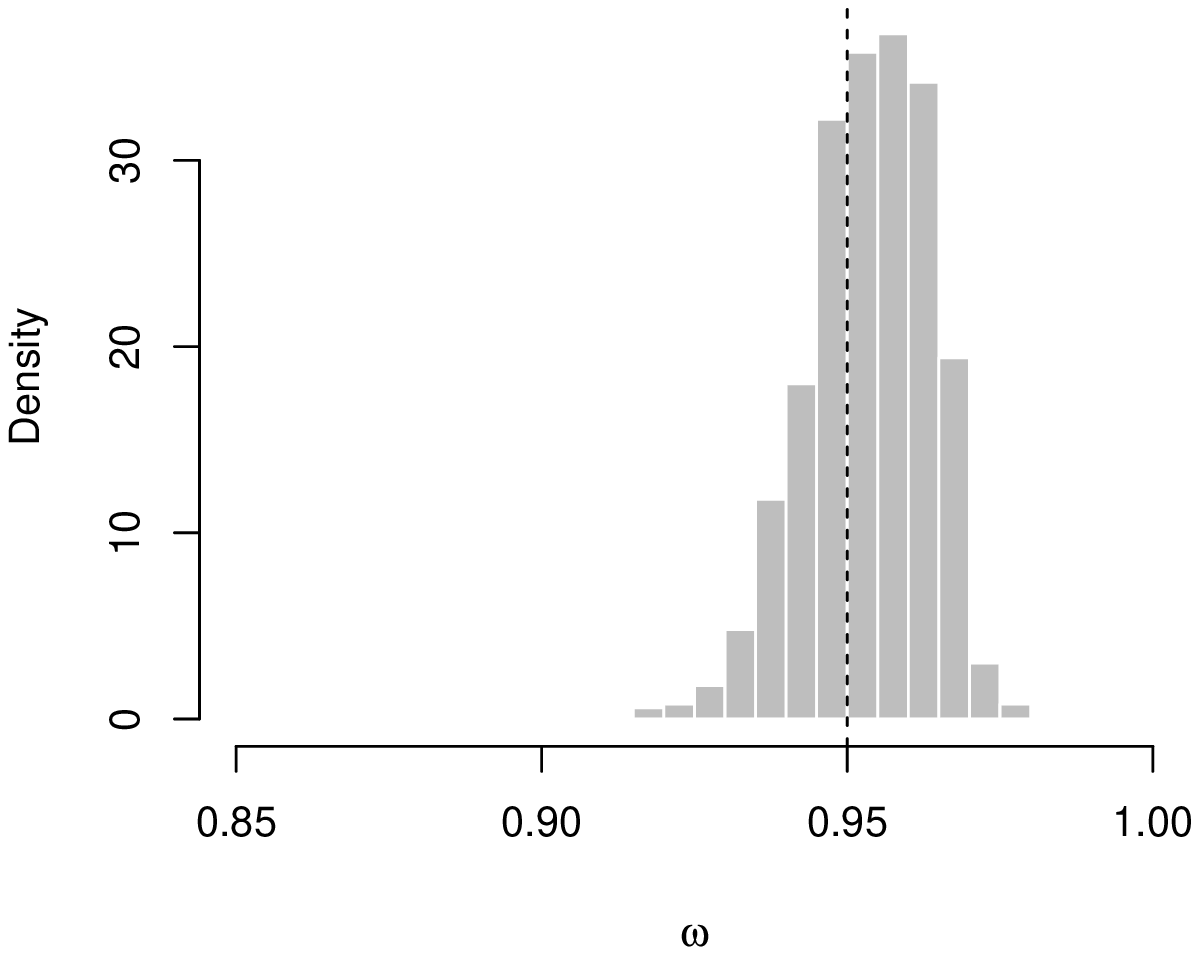}}}
\end{center}
\caption{Posterior distributions for $\omega$ when $n=500$ and $A=5$ for two values of $s_n$.  In each case, $\kappa=0.99$, $\sigma^2=100$, and $\alpha=0.10$.}
\label{fig:wpost}
\end{figure}

As a final example, consider a $n=1000$ dimensional mean vector, with the first 10 entries of $\theta^\star$ equal 10, the next 90 entries equal $A$, and the remaining 900 entries equal zero.  Mean square errors for two Dirichlet--Laplace estimators in \citet{dunson.shrinkage} and our empirical Bayes estimator, based on $\alpha=0.05$, are displayed in Table~\ref{table:sim3}.  Here we consider a range of $A$, from $A=2$ to $A=7$.  For the smaller signals, $A \leq 4$, the Dirichlet--Laplace estimator, with smaller prior weight $n^{-1}$ is the best, but our estimator is better for larger signals, $A > 4$.  The larger weight Dirichlet--Laplace prior estimator is dominated by our empirical Bayes estimator.  

\begin{table}
\begin{center}
\begin{tabular}{cccccccc}
\hline
$A$ & & 2 & 3 & 4 & 5 & 6 & 7 \\
\hline
DL$_{1/n}$ & & {\bf 307} & {\bf 354} & {\bf 271} & 205 & 183 & 169  \\
DL$_{1/2}$ && 368 & 679 & 671 & 374 & 214 & 160 \\
\\
\emph{EBM} & & 320 & 416 & 291 & {\bf 172} & {\bf 137} & {\bf 129} \\
\hline
\end{tabular}
\end{center}
\caption{Mean square errors, based on 100 replications, sampling $X$ of dimension $n=200$.  First two rows are from \citet{dunson.shrinkage}; last row corresponds to the proposed empirical Bayes posterior mean.  Boldface font indicates the column winner.}
\label{table:sim3}
\end{table}

\begin{remark}
\label{re:koenker}
There are a number of existing methods available for this problem besides those included in our comparisons here.  These include the lasso \citep{tibshirani1996}, the Bayesian lasso \citep{park.casella.2008}, the horseshoe prior estimator \citep{carvalho.polson.scott.2010}, the empirical Bayes estimators of \citet{jiang.zhang.2009}, \cite{brown.greenshtein.2009}, and, most recently, \citet{koenker.mizera.2014}.  Some of these methods, including a version of ours, are compared more extensively in \citet{koenker2014}.  Those estimators without minimax guarantees, such as the Koenker--Mizera estimator, can only be motivated by finite-sample simulation studies which, by necessity, are narrowly constructed.  On the other hand, our estimator has the desired minimax property and also has the best overall finite-sample performance among those provably minimax competitors.
\end{remark}

\section{Discussion}
\label{S:discuss}

The paper has considered a classical problem of estimating a sparse high-dimensional normal mean vector, and we have proposed a novel empirical Bayes solution.  Though the stated prior itself may seem overly informative, we show that the prior induces a sort of shrinkage effect, preventing the posterior from tracking the data too closely.  We go on to prove that the empirical Bayes posterior concentrates around $\theta^\star$ at the minimax rate, that its mean is an asymptotic minimax estimator, and that its effective dimension agrees with that of the true sparse mean vector.  

The mathematical device used in our asymptotic analysis is an alternative representation of the empirical Bayes model with a fractional likelihood.  As in \citet{walker.hjort.2001}, this fractional likelihood posterior is a powerful tool, though our concentration results do not follow immediately from theirs.  With this adjustment, the prior changes to a very simple one, which we have called $\tilde\Pi_X$.  The key to success of our empirical Bayes posterior in the asymptotic framework is the particular beta prior on $\omega$, under $\tilde\Pi_X$.  From this prior, and the lower bound derived in Lemma~\ref{lem:denominator}, the minimax rate $\eps_n = s_n \log(n/s_n)$ drops out almost automatically.  As we indicated, to push through the minimax concentration result, we only need the conditional prior on $\theta$, given $\omega$, under $\tilde\Pi_X$, to not interfere with the dynamics induced by the prior on $\omega$.  Intuitively, there should be many priors that would accomplish this.  We showed that an empirical Bayes prior that by centering a Gaussian prior at the observations, under $\tilde\Pi_X$, minimax concentration follows relatively easily.  \citet{castillo.vaart.2012} have similar results, e.g., they make sure the prior for $\theta$ does not interfere by requiring suitably heavy tails.  

In addition to the good large-sample properties, our empirical Bayes procedure is easy to compute, and, in a number of cases, the finite-sample performance of our empirical Bayes posterior mean is considerably better than that of existing methods with comparable large-sample properties (Remark~\ref{re:koenker}).  Since our method admits a full posterior distribution, any other feature, such as the inclusion probabilities displayed in Figure~\ref{fig:inclpr}, useful in the signal detection problem, can be readily calculated.  

A possible extension of the method presented herein is as follows.  Suppose that each $X_i$ and $\theta_i$ are $r$-vectors, where $r=r_n$ possibly depends on $n$.  Collecting some of the variables together in vectors introduce a group structure.  This structure appears in a variety of applications, and this has motivated developments in model selection and estimation with grouped variables \citep[e.g.,][]{yuan.lin.2006}.  \citet{abramovich.grinshtein.2013} prove asymptotic minimaxity of a Bayes method in this grouped setting, and we expect that similar results can be derived based on the ideas presented here.

\section*{Acknowledgements}

The authors are thankful to Professor Roger Koenker who gave some helpful comments on an earlier draft as well as suggestions for improving our Gibbs sampler codes.

\bibliographystyle{apa}
\bibliography{/Users/rgmartin/Dropbox/Research/mybib}

\end{document}